\newcommand{\res}{\upharpoonright}
\newcommand{\N}{{\mathbb N}}
\theoremstyle{plain}
\newtheorem{theorem}{Theorem}
\newtheorem{lemma}[theorem]{Lemma}
\newtheorem{proposition}[theorem]{Proposition}
\theoremstyle{remark}
\newtheorem{claim}{Claim}
\begin{document}

\title[Ramsey theorem for partial orders]{A Ramsey theorem for partial orders\\ with linear extensions}

\author{S{\l}awomir Solecki \and Min Zhao}

\address{Department of Mathematics\\
University of Illinois\\
1409 W. Green St.\\
Urbana, IL 61801, USA}

\email{ssolecki@math.uiuc.edu}

\email{minzhao1@math.uiuc.edu}

\thanks{Research supported by NSF grant DMS-1266189.}

\subjclass[2010]{05D10, 05C55}

\begin{abstract} We prove a Ramsey theorem for finite sets equipped with a partial order and a fixed number of linear orders extending the partial order.
This is a common generalization of two recent Ramsey theorems due to Soki{\'c}. As a bonus, our proof gives new arguments for these two results.
\end{abstract}

\maketitle

\section{The theorem}\label{S:theo}

In recent years, there has been a renewed interest in Structural Ramsey Theory sparked by the discovery in \cite{KPT} of connections between this area 
and Topological Dynamics. Paper \cite{NVT} gives a survey of these developments. In this context, some attention was directed towards the so-called mixed structures  
obtained by superimposing a number of simpler structures that are known to be Ramsey; see \cite[Section 5.7]{NVT}. A general Ramsey theorem for such structures 
was proved in \cite{Bo} (see also \cite{Sok3}) under the additional assumption that the superimposed structures are independent from each other. The present work contributes 
a particular structural Ramsey theorem to this area, where the superimposed structures are not independent, but rather are interconnected in a natural way.

In this paper, all orders are strict orders.

{\em For the rest of the paper, we fix a natural number $p>0$.}

By a {\em structure} we understand a set $X$ equipped with a partial order $P$ and $p$ linear orders $L_0, \dots, L_{p-1}$ each of which extends $P$. We write
\[
\vec{L}
\]
for $(L_0, \dots, L_{p-1})$ and
\[
(X, P, \vec{L})
\]
for the whole structure. A structure is called finite if $X$ is a finite set. Given two structures ${\mathcal X} = (X, P^X, \vec{L}^X)$ and ${\mathcal Y}= (Y, P^Y, \vec{L}^Y)$,
a function $f\colon X\to Y$ is an {\em embedding} if for all $x_1, x_2\in X$
\[
x_1P^Xx_2\Longleftrightarrow f(x_1)P^Y f(x_2)
\]
and, for each $i<p$,
\[
x_1L_i^Xx_2\Longleftrightarrow f(x_1)L_i^Y f(x_2).
\]
By a {\em copy} we understand the image of an embedding.

For a natural number $d>0$, a {\em $d$-coloring} is a coloring with $d$ colors.

\begin{theorem}\label{T}
Let $d>0$, and let ${\mathcal X}= (X, P^X, \vec{L}^X)$ and ${\mathcal Y}= (Y, P^Y, \vec{L}^Y)$ be finite structures.
There exists a finite structure ${\mathcal Z} = (Z, P^Z, \vec{L}^Z)$ with the following property: for each $d$-coloring of all copies
of $\mathcal X$ in $\mathcal Z$, there exists a copy ${\mathcal Y}'$ of $\mathcal Y$ in $\mathcal Z$ such that all copies of $\mathcal X$
in ${\mathcal Y}'$ have the same color.
\end{theorem}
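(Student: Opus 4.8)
\section*{Proof proposal}

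The statement asserts exactly that the class $\mathcal{K}$ of all finite structures $(X,P,\vec L)$ has the Ramsey property, written $\mathcal Z\to(\mathcal Y)^{\mathcal X}_d$. The plan is to prove this by the Ne\v{s}et\v{r}il--R\"odl \emph{partite construction}. First I would record the standing preliminaries that the method needs: the class $\mathcal K$ is closed under substructures, it has the joint embedding and amalgamation properties (linear orders can always be interleaved, and the interleaving can be done so as to respect a common partial order), and, crucially, every structure in $\mathcal K$ is \emph{rigid} — an embedding of a structure into itself is the identity, since it must preserve each of the linear orders $L_0,\dots,L_{p-1}$. I would also make the usual reductions: it suffices to treat two colors $d=2$, and to build $\mathcal Z$ separately for each pair $\mathcal X,\mathcal Y$, so that the target is a single $\mathcal Z$ with $\mathcal Z\to(\mathcal Y)^{\mathcal X}_2$.

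The first substantial step is the \emph{partite lemma}. Writing $a=|X|$, I would work with $a$-partite structures: members of $\mathcal K$ whose underlying set is partitioned into $a$ classes, and I would restrict attention to \emph{transversal} copies of $\mathcal X$, i.e.\ those meeting each class in exactly one point. The content of the lemma is that for a given $a$-partite $\mathcal B$ there is an $a$-partite $\mathcal A\in\mathcal K$ such that any $2$-coloring of the transversal copies of $\mathcal X$ in $\mathcal A$ yields a transversal copy of $\mathcal B$ inside which the transversal copies of $\mathcal X$ are monochromatic. Because every transversal copy hits each class once, the partial order and the $p$ linear orders \emph{between} classes are pinned down by the ambient partite picture; what varies is only the within-class data. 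I expect to reduce this to the Hales--Jewett theorem — or, in this ordered setting, to a suitable product form of the classical finite Ramsey theorem applied simultaneously to $L_0,\dots,L_{p-1}$ — taking the ``alphabet'' to encode the order-type information of a single class and then reading off the color from a combinatorial line.

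The second substantial step is the partite construction itself, which upgrades the transversal statement to the full Ramsey statement by dealing with the non-transversal copies. I would fix a large ``skeleton'' consisting of many copies of $\mathcal Y$ (indexed by the copies of $\mathcal X$ one wishes to stabilize), spread its vertices across $a$ classes, and then iterate: process the non-transversal copies of $\mathcal X$ one at a time, at each stage amalgamating a fresh instance of the partite object $\mathcal A$ from the partite lemma over the already-built structure so that the current copy of $\mathcal X$ is absorbed into a transversal one and its color is forced. After finitely many steps every copy of $\mathcal X$ has been forced, and the resulting structure $\mathcal Z$ contains a copy of $\mathcal Y$ on which the coloring is constant.

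The main obstacle, and the point where the present situation genuinely differs from the independent case treated in \cite{Bo}, is keeping every intermediate object inside $\mathcal K$. At each amalgamation one must glue the $p$ linear orders and the partial order \emph{simultaneously}, in such a way that each glued $L_i$ remains a linear order \emph{and} still extends the glued $P$. Since $L_0,\dots,L_{p-1}$ are not independent — they all extend the common $P$ — the $p$ orders cannot be shuffled freely and separately; the interleaving chosen in one coordinate constrains the others through $P$. I would resolve this by fixing once and for all a canonical linearization of each amalgam (for instance, ordering first by the copy-index introduced at that stage and then by the induced order inside each copy), and then verifying directly that this canonical choice is consistent across all $p$ coordinates and compatible with the amalgamated partial order. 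Carrying this compatibility check cleanly through the induction of the partite construction is where the real work lies; once it is in place, the preceding steps assemble into the theorem.
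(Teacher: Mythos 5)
Your proposal correctly identifies what must be proved (the Ramsey property for the class $\mathcal K$ of finite sets with a partial order and $p$ linear extensions) and correctly locates the obstacle, but it does not overcome it: the two steps you defer are precisely the ones that are known to fail for the standard partite machinery on this class. First, $\mathcal K$ is not a free amalgamation class. Gluing structures over a common substructure forces taking the transitive closure of the union of the partial orders, which creates new $P$-relations between points lying in different glued pieces. In the partite construction this wrecks the backward induction on pictures: copies of $\mathcal X$ can arise that straddle several of the copies of the previous picture glued onto the partite-lemma object, and such copies are controlled neither by the partite lemma nor by the previous stage. This is exactly why even the $p=1$ case (posets with one linear extension) is not an instance of the Ne\v{s}et\v{r}il--R\"odl theorem --- the forbidden pattern expressing transitivity ($xPy$ and $yPz$ but not $xPz$) is not irreducible --- and why that case historically required the separate arguments of Paoli--Trotter--Walker and Fouch\'e; making a partite construction work here needs machinery (closures, iterated partite constructions in the style of Hubi\v{c}ka--Ne\v{s}et\v{r}il) far beyond ``fixing a canonical linearization of each amalgam and verifying compatibility,'' which is a restatement of the problem rather than an argument.

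Second, your structural premise for the partite lemma --- that in an $a$-partite picture ``the partial order and the $p$ linear orders between classes are pinned down by the ambient partite picture,'' so that only within-class data varies --- is false for $p\geq 2$, and already false for the partial order alone. In the initial picture, copies of $\mathcal Y$ sit on all $b$-element sets of classes with $L_0$ matching the class order; if $L_1^Y$ agrees with $L_0^Y$ on one pair of points and disagrees on another (the typical case), two copies of $\mathcal Y$ impose opposite $L_1$-relations between the same two classes, and likewise they impose $P$-relations between some pairs of classes and not others. So the between-class data cannot be fixed, none of $L_1,\dots,L_{p-1}$ can be made convex with respect to the partition, and the Hales--Jewett power construction you invoke has no canonical definition of the $p$ orders on the power for which combinatorial lines are embeddings; this is the interdependence problem in concrete form. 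In the paper, the role you are asking Hales--Jewett to play is performed by Solecki's Dual Ramsey Theorem with Constants, applied to the linearly ordered set $\mathrm{lin}_{L_0^Y}(P^Y)$ of all linear extensions of $P^Y$ with the $p$ orders of $\mathcal Y$ as anchored constants, combined with the classical product Ramsey theorem; the interdependence of the orders is absorbed by the Paoli--Trotter--Walker/Fouch\'e lemma that the restriction map $\mathrm{lin}_{L}(P)\to \mathrm{lin}_{L\upharpoonright X}(P\upharpoonright X)$ is a rigid surjection, and the Ramsey object is exhibited concretely as $(n^m, <_{\rm pr}, \vec{<}_{{\rm lx},\vec i})$. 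Your sketch contains no ingredient playing this role, and without one the plan does not go through.
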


The theorem above gives a common generalization of the following two of its known special cases.

The first one is the case $p=1$, that is, the case when structures are equipped with a partial order and a single linear order extending it.
This case was proved by Soki{\'c} \cite[Theorem 7(6)]{Sok} using results of Paoli, Trotter and Walker \cite{PTW} and Fouch{\'e} \cite{Fo}.
Because of certain peculiar features of Soki{\'c}'s argument (for example, the usage of the ordering property to prove the Ramsey property),
there has been some interest in finding a more direct proof. Our argument for Theorem~\ref{T} specialized to the case $p=1$ gives just such a short and
direct proof.

The second case is the case of finite sets endowed only with $p$ linear orders. This situation corresponds to
$P^X=P^Y=\emptyset$ (when one can obviously make $P^Z=\emptyset$) in Theorem~\ref{T}. It was proved by Soki{\'c} in \cite[Theorem 10]{Sok2}.
Our proof here also specializes to an argument different from the one in \cite{Sok2}.

In our proofs, we use some ideas from \cite{Fo} and \cite{PTW}. We connect them with a special case of the main theorem from \cite{So}.

The proof of Theorem~\ref{T} is structured as follows. In Section~\ref{S:prodth}, we prove a product Ramsey theorem that is the Ramsey theoretic core of 
Theorem~\ref{T}. In Sections~\ref{S:lira} and \ref{S:str},
we make explicit certain canonical structures and morphisms important to the proof. Once these structures are properly defined and their natural properties are
established, the theorem is proved by appropriately interpreting the objects involved in it and applying the product Ramsey theorem from Section~\ref{S:prodth}.
This is done in Section~\ref{S:pr}. Section~\ref{S:redo} has an explanatory character. In it, we make precise the relationship between the product Ramsey theorem and Theorem~\ref{T}
using general notions introduced \cite{Sol}.

\section{A product Ramsey theorem}\label{S:prodth}

As promised in Section~\ref{S:theo}, we prove here a product Ramsey result, Proposition~\ref{P:VProduct}, needed in our proof of Theorem~\ref{T}.  
We establish it as a consequence of two known Ramsey theorems.

We adopt the notational convention that each natural number is equal to the set
of its predecessors, that is,
\[
m = \{ i\colon i<m\}.
\]
In particular, $0 = \emptyset$. The set $m$ is considered to be linearly ordered with its natural order inherited from $\N$.
For a set $X$ and a natural number $k$,
\[
\binom{X}{k}
\]
is the family of all $k$ element subsets of $X$. The set $X$ can itself be a natural number $m$ and then $\binom{m}{k}$ is the family of all $k$
element subsets of $m$.

We formulate all our results in terms of rigid surjections, rather than partitions, as this form fits the applications better; see Lemma~\ref{L:rig} and the proof of Lemma~\ref{L:two}(ii).
Let $A$, $B$ be two finite linearly ordered sets.
A function $r\colon B\to A$ is a {\em rigid surjection} if it is a surjection and the images of initial segments of $B$ are initial segments of $A$, in other words,
if for all $a_1, a_2\in A$, with $a_1$ preceding $a_2$ in $A$, we have that $a_1$ is
first attained by $r$ before $a_2$ is first attained by $r$. See \cite{Sol} for information on the language of rigid surjections.

Recall that we have fixed a natural number $p>0$. A sequence $\vec{a} = (a_0, \dots, a_{p-1})$ of length $p$ of elements of $A$ is called {\em anchored} if
$a_0$ is the smallest element of $A$.

We will be considering linearly ordered sets $A$ and $B$ with anchored sequences $\vec{a} = (a_0, \dots, a_{p-1})$ in $A$ and
$\vec{b} = (b_0, \dots, b_{p-1})$ in $B$.
Let
\[
\binom{B, \vec{b}}{A, \vec{a}}_{\rm rs}
\]
be the set of all rigid surjections $r\colon B \to A$ such that $r(b_i)=a_i$ for all $i <p$.
Note that having anchored sequences $\vec{a} = (a_0, a_1, \dots, a_{p-1})$ and
$\vec{b} = (b_0, b_1\dots, b_{p-1})$ is equivalent, in this context, to having arbitrary sequences
$(a_1, \dots, a_{p-1})$ and $(b_1\dots, b_{p-1})$ since $r$ automatically maps the smallest element of $A$ to the smallest element of $B$. However, in view of our applications
in Section~\ref{S:str}, it will be notationally convenient to keep the elements $a_0$ and $b_0$ in the sequences.

Let $m$ be a natural number. Let $\vec{i} = (i_0, \dots, i_{p-1})$ be an anchored sequence of elements of $m$.
For finite subsets $S_0, \dots, S_{m-1}, T_0, \dots, T_{m-1}$ of $\N$ and $s\in \binom{m, \vec{i}}{A, \vec{a}}_{\rm rs}$ and $t\in \binom{m, \vec{i}}{B, \vec{b}}_{\rm rs}$, we write
\[
(S_0, \dots, S_{m-1},s)\ll  (T_0, \dots, T_{m-1}, t)
\]
if for each $i< m$, $S_i\subseteq T_i$ and there is $r\in \binom{B, \vec{b}}{A, \vec{a}}_{\rm rs}$ with $s=r\circ t$.

\begin{proposition}\label{P:VProduct} Assume we are given $d>0$, finite linearly ordered sets $A, B$, anchored
sequences $\vec{a}$ and $\vec{b}$ of length $p$ of elements of $A$
and $B$, respectively, and two natural numbers $k, l$. Then there exist natural numbers $m, n$  and an anchored sequence $\vec{i}$ of length $p$ of elements of $m$ such that for
each $d$-coloring of
$\binom{n}{k}^m\times \binom{m, \vec{i}}{A, \vec{a}}_{\rm rs}$
there exists $(T_0, \dots, T_{m-1}, t) \in \binom{n}{l}^m\times \binom{m, \vec{i}}{B, \vec{b}}_{\rm rs}$ such that
\begin{equation}\notag
\{ (S_0, \dots, S_{m-1}, s) \in \binom{n}{k}^m\times \binom{m, \vec{i}}{A, \vec{a}}_{\rm rs}\colon (S_0, \dots, S_{m-1},s)\ll  (T_0, \dots, T_{m-1}, t)\}
\end{equation}
is monochromatic.
\end{proposition}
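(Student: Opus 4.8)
The plan is to deduce Proposition~\ref{P:VProduct} by gluing together two ingredients, each of which I would treat as known. The first is the Ramsey theorem for rigid surjections with anchored sequences --- the special case of the main theorem of \cite{So} alluded to in Section~\ref{S:theo}: given finite linear orders $A,B$ with anchored sequences $\vec a,\vec b$ and given $d$ colors, there is a natural number $m$ with an anchored sequence $\vec i$ such that every $d$-coloring of $\binom{m,\vec i}{A,\vec a}_{\rm rs}$ admits $t\in\binom{m,\vec i}{B,\vec b}_{\rm rs}$ for which $\{r\circ t : r\in\binom{B,\vec b}{A,\vec a}_{\rm rs}\}$ is monochromatic. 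The second is the classical product Ramsey theorem for finite sets: for all $D,k,l,m$ there is $n$ so that every $D$-coloring of $\binom{n}{k}^m$ admits $(T_0,\dots,T_{m-1})\in\binom{n}{l}^m$ with $\binom{T_0}{k}\times\cdots\times\binom{T_{m-1}}{k}$ monochromatic. I would combine these by the standard two-step argument showing that a product of Ramsey properties is again Ramsey, the key observation being that $\ll$ is exactly the conjunction of the subset reduction $S_i\subseteq T_i$ on the $\binom{n}{k}^m$-coordinate and the composition reduction $s=r\circ t$ on the rigid-surjection coordinate.

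The order in which the parameters are chosen is forced, and getting it right is the crux. Since $m$ is produced by the rigid surjection theorem and at the same time appears as the exponent in $\binom{n}{k}^m$, I must commit to $m$ (and $\vec i$) first, before $n$ is available. So I would apply the rigid surjection theorem to $A,B,\vec a,\vec b$ with just the original $d$ colors, obtaining $m$ and $\vec i$ that depend only on this data. Only then do I set $Q=|\binom{m,\vec i}{A,\vec a}_{\rm rs}|$, now a fixed finite number, and apply the product Ramsey theorem for subsets with $m$ factors and $D=d^{Q}$ colors to obtain $n$. The opposite order --- resolving the subset coordinate first and inflating the colors fed to the rigid surjection theorem --- fails, because the inflation factor would be the number of subset-tuples, which depends on $n$, which in turn depends on $m$: a genuine circularity. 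Hence the color blow-up must be absorbed entirely on the subset side.

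With $m,\vec i,n$ in hand, the main argument runs as follows. Given a $d$-coloring $c$ of $\binom{n}{k}^m\times\binom{m,\vec i}{A,\vec a}_{\rm rs}$, I would define the inflated coloring $\tilde c\colon\binom{n}{k}^m\to d^{\,\binom{m,\vec i}{A,\vec a}_{\rm rs}}$ by $\tilde c(\vec S)(s)=c(\vec S,s)$, using $D=d^{Q}$ colors. The product subset Ramsey theorem then yields $(T_0,\dots,T_{m-1})\in\binom{n}{l}^m$ on which $\tilde c$ is constant, equivalently a single map $\phi\colon\binom{m,\vec i}{A,\vec a}_{\rm rs}\to d$ with $c(\vec S,s)=\phi(s)$ whenever $S_i\subseteq T_i$ for all $i<m$. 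Now $\phi$ is an ordinary $d$-coloring of $\binom{m,\vec i}{A,\vec a}_{\rm rs}$, so the instance of the rigid surjection theorem secured above supplies $t\in\binom{m,\vec i}{B,\vec b}_{\rm rs}$ with $\phi$ constant, say equal to $\gamma$, on $\{r\circ t : r\in\binom{B,\vec b}{A,\vec a}_{\rm rs}\}$. For any $(\vec S,s)\ll(\vec T,t)$ I then have $S_i\subseteq T_i$ for all $i$ and $s=r\circ t$ for some $r$, whence $c(\vec S,s)=\phi(s)=\gamma$; this is exactly the required monochromaticity.

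The argument is routine once both Ramsey inputs are granted, so the genuine content --- and the only real obstacle --- is the parameter bookkeeping described above: verifying that $\ll$ decomposes as the product of the two reductions, and arranging the single permitted order of quantification so that $m$ is fixed before $n$ and the color inflation lands on the subset coordinate. I would also check that the anchoring is respected throughout, since the identities $s(i_j)=a_j$, $t(i_j)=b_j$, and $r(b_j)=a_j$ are consistent with $s=r\circ t$, so no compatibility is lost in passing between the two theorems.
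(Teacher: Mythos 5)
Your proposal is correct and is essentially identical to the paper's own proof: both fix $(m,\vec i)$ first via the Dual Ramsey Theorem with Constants (the special case of \cite{So} with $d$ colors), then choose $n$ via the Product Ramsey Theorem with $d^{|\binom{m,\vec i}{A,\vec a}_{\rm rs}|}$ colors, and your inflated coloring $\tilde c$ is exactly the paper's auxiliary coloring $\psi$. The decomposition of $\ll$ into the subset and composition reductions, and the order of parameter choices, match the paper's argument step for step.
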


Proposition \ref{P:VProduct} is a quick consequence of two known Ramsey statements, which we now recall.
The first statement is the product of the classical Ramsey theorem,  see \cite{GRS}.
For $S_0, \dots, S_{m-1}, T_0, \dots, T_{m-1}$ finite subsets of $\N$, we write
\[
(S_0, \dots, S_{m-1})\leq (T_0, \dots, T_{m-1}),
\]
if for each $i< m$, $S_i\subseteq T_i$.

\smallskip

\noindent {Product Ramsey Theorem.}
{\em Given $d>0$ and natural numbers $k, l, m$, there exists a natural number $n$ such that for each $d$-coloring of $\binom{n}{k}^m$
there exists $(T_0, \dots, T_{m-1}) \in \binom{n}{l}^m$ such that
\[
\{ (S_0, \dots, S_{m-1}) \in \binom{n}{k}^m\colon  (S_0, \dots, S_{m-1})\leq  (T_0, \dots, T_{m-1})\}
\]
is monochromatic.}

\smallskip

The following result is a particular case of \cite[Theorem 1]{So}. (One considers \cite[Theorem 1]{So} for
the language consisting of $p-1$ constants, that is, $p-1$ function symbols of arity $0$.) The case $p=1$ of this result is just the dual Ramsey theorem.

\smallskip
\noindent {Dual Ramsey Theorem with Constants.}
{\em Assume we are given $d>0$ and finite linearly ordered sets $A, B$ with anchored sequences $\vec{a}$ and $\vec{b}$ of length $p$ in $A$ and $B$,
respectively.
Then there exist a natural number $m$ and an anchored sequence $\vec{i}$ of length $p$ of elements of $m$ such that for each $d$-coloring of
$\binom{m,\vec{i}}{A,\vec{a}}_{\rm rs}$
there exists $t \in \binom{m,\vec{i}}{B, \vec{b}}_{\rm rs}$ with
\[
\{ s\circ t\colon s\in \binom{B,\vec{b}}{A,\vec{a}}_{\rm rs}\}
\]
monochromatic.}

\begin{proof}[Proof of Proposition~\ref{P:VProduct}]

Choose $(m,\vec i)$ in terms of $ d,\,p,\,(A,\vec a),\, (B,\vec b)$ so the Dual Ramsey Theorem with Constants holds for $(m,\vec i)$.
Let $n$ be large in terms of $A,\, d,\,  k,\, l,\, m$ so the Product Ramsey Theorem with $d^{|\binom{m,\vec{i}}{A,\vec{a}}_{\rm rs}|}$
colors holds for $n$.

Let $\phi$ be a coloring with $d$ colors of $\binom{n}{k}^m\times  \binom{m,\vec{i}}{A,\vec{a}}_{\rm rs}$.
Let $\psi$ be a coloring with $d^{|\binom{m,\vec{i}}{A,\vec{a}}_{\rm rs}|}$ colors of $\binom{n}{k}^m$ such that for each
$(S_0,\dots,S_{m-1})$, $(S_0^{\prime},\dots,S_{m-1}^{\prime})\in \binom{n}{k}^m$
\[
\begin {split}
&\psi(S_0,\dots,S_{m-1})=\psi(S_0^{\prime},\dots,S_{m-1}^{\prime}) \Longleftrightarrow \\
&\forall s\in \binom{m,\vec{i}}{A,\vec{a}}_{\rm rs}\; \phi(S_0,\dots,S_{m-1},s)=\phi(S_0^{\prime},\dots,S_{m-1}^{\prime},s).
\end {split}
\]
Then by the choice of $n$, there exists $(T_0,\dots,T_{m-1})\in {\binom{n}{l}}^m$, such that $\psi$ is constant on
\[
\{(S_0,\dots,S_{m-1})\in \binom{n}{k}^m\colon (S_0,\dots,S_{m-1})\leq (T_0,\dots,T_{m-1}) \},
\]which implies for $(S_0,\dots,S_{m-1})\in \binom{n}{k}^m$ with $(S_0,\dots,S_{m-1})\leq (T_0,\dots,T_{m-1})$, the color
$\phi(S_0,\dots,S_{m-1},s)$ only depends on $s$.
Then by the choice of $(m,\vec i)$, there exists $t\in \binom{m,\vec{i}}{B, \vec{b}}_{\rm rs}$ such that
$\phi$ is constant on the set from the conclusion of the proposition.
\end {proof}

\section{Linear orders and a twisted product Ramsey theorem}\label{S:lira}

The point of this section is to obtain a reformulation of Proposition~\ref{P:VProduct} that introduces a twist to the product. 

First, we need to define new objects. 
Let $K$ be a linear order on a set $X$, as usual assumed to be a strict order, and let $x\in X$. Put
\begin{equation}\label{E:cut}
(K)_x = (\{ y\in X\colon yKx\}, K\res \{ y\in X\colon yKx\}).
\end{equation}

Let $L$ be a linear order on a finite set $Y$. By
\begin{equation}\label{E:linn}
{\rm lin}_L
\end{equation}
we denote the set of all linear orders on $Y$, which we order as follows.
Let $L_1, L_2\in {\rm lin}_L$. We put $L_1$ {\em below} $L_2$ if there exist $x,y\in Y$ such that
$(L_1)_x= (L_2)_y$ and $xLy$. (By $(L_1)_x= (L_2)_y$ here we mean the literal equality, not just an isomorphism.) In other words, let $|Y|=n$ and let $(x_i)_{i< n}$ and
$(y_i)_{ i<n}$ be enumerations of $Y$ in the $L_1$- and $L_2$-increasing order, respectively. We put $L_1$ below $L_2$ if
$(x_i)_{i<n}$ is smaller than $(y_i)_{i< n}$ in the lexicographic order with respect to $L$.

The proof of the following lemma is straightforward.

\begin{lemma}\label{L:lin}
${\rm lin}_L$ is linearly ordered by the above defined relation and $L$ is its smallest element.
\end{lemma}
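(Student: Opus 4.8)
The plan is to pass from linear orders to their increasing enumerations and to reduce the whole statement to standard facts about lexicographic orders. Write $n = |Y|$. Each $L_1 \in {\rm lin}_L$ determines, and is determined by, its increasing enumeration $(x_i)_{i<n}$, the sequence listing every element of $Y$ exactly once with $x_0 L_1 x_1 L_1 \cdots L_1 x_{n-1}$. I would show that under this bijection the relation ``$L_1$ below $L_2$'' corresponds exactly to ``$(x_i)_{i<n}$ is $<_{\rm lex}$-smaller than $(y_i)_{i<n}$ with respect to $L$'', where $(y_i)_{i<n}$ is the increasing enumeration of $L_2$; this is the reformulation already recorded in the definition, and once it is justified both assertions of the lemma follow at once.

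Verifying this correspondence is the one point that needs attention. Suppose $(L_1)_x = (L_2)_y$ for some $x,y \in Y$. Since $(L_1)_x$ is the set of $L_1$-predecessors of $x$ carrying the order inherited from $L_1$, and similarly for $(L_2)_y$, literal equality forces these two ordered sets to coincide; if the common set has $j$ elements then, as it consists of exactly the first $j$ entries of each enumeration, we must have $x_i = y_i$ for all $i<j$ together with $x = x_j$ and $y = y_j$. Conversely, whenever $x_i = y_i$ for all $i<j$, the pair $(x_j,y_j)$ satisfies $(L_1)_{x_j} = (L_2)_{y_j}$. Hence, assuming $L_1 \ne L_2$ and letting $j^\ast$ be the least index with $x_{j^\ast} \ne y_{j^\ast}$, the only witnessing pairs with distinct coordinates occur at $j^\ast$, so ``$L_1$ below $L_2$'' holds precisely when $x_{j^\ast} L y_{j^\ast}$, which is exactly the statement that $(x_i)_{i<n}$ is $<_{\rm lex}$-smaller than $(y_i)_{i<n}$.

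With the correspondence established, linearity is routine: for distinct $L_1,L_2$ the comparison is settled at the first index $j^\ast$ where the enumerations differ, where linearity of $L$ gives exactly one of $x_{j^\ast} L y_{j^\ast}$ or $y_{j^\ast} L x_{j^\ast}$ (trichotomy); irreflexivity holds because an enumeration has no first disagreement with itself; and transitivity is the standard first-difference argument. Finally, to see that $L$ is smallest, I would compare its own increasing enumeration $(e_i)_{i<n}$ with any other $(y_i)_{i<n}$: at the first index $j^\ast$ where they differ, $e_{j^\ast}$ is the $L$-least element of $Y \setminus \{e_0,\dots,e_{j^\ast-1}\}$, a set that also contains $y_{j^\ast} \ne e_{j^\ast}$, so $e_{j^\ast} L y_{j^\ast}$ and $L$ lies below every other order.

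The only genuine content, and thus the main obstacle, is the analysis in the second paragraph showing that literal equality of cuts pins the witnessing pair down to $(x_j,y_j)$ with the enumerations agreeing below $j$; this is precisely where the stipulation of literal (not merely isomorphic) equality in the definition of ``below'' is used, and it is what collapses the combinatorics of cuts into the transparent lexicographic picture. Everything after that reduction is bookkeeping with lexicographic orders.
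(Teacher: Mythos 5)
Your proof is correct and takes exactly the route the paper intends: the paper omits the proof as ``straightforward,'' but its definition of \emph{below} already records the lexicographic reformulation via increasing enumerations, and your argument supplies precisely the missing justification --- that literal equality of cuts forces the witnessing pair to sit at a common index with the enumerations agreeing below it --- after which linearity, irreflexivity, transitivity, and the minimality of $L$ are the standard first-difference facts about lexicographic orders, handled correctly in your write-up.
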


Assume we are given a natural number $m$ and $B\subseteq {\rm lin}_L$.
Let $\vec i$, $\vec b$ be anchored sequences of length $p$ of elements of $m$ and $B$, respectively.
For
\begin{equation}\label{E:tT}
\tau= (T_0,\dots,T_{m-1},t)\in {{\N}\choose{|Y|}}^m\times \binom{m, \vec{i}}{B, \vec b}_{\rm rs}
\end{equation}
and $i<m$, let
\begin{equation}\label{E:pii}
\pi^\tau_i:(Y,t(i))\to (T_i,<\upharpoonright T_i)
\end{equation}
be the unique isomorphism.
Assume we are additionally given a linear order $K$ on a finite set $X$,
$A\subseteq {\rm lin}_K$, and an anchored sequence $\vec a$ of length $p$ of elements of $A$.
Let $\tau$ be as in \eqref{E:tT} and let
\[
\sigma= (S_0,\dots,S_{m-1},s)\in  {{Y}\choose{|X|}}^m\times \binom{B, \vec b}{A, \vec a}_{\rm rs}.
\]
Define
\begin {equation}\label {E:TwistComp}
\tau\cdot \sigma =(\pi^\tau_0(S_0),\dots,\pi^\tau_{m-1}(S_{m-1}),\, s\circ t) \in  {{\N}\choose{|X|}}^m\times \binom{m, \vec{i}}{A, \vec a}_{\rm rs}.
\end {equation}

If $n$ is a natural number taken with the linear order $<\res \N$ inherited from $\N$, we let
\[
{\rm lin}_n  = {\rm lin}_{<\res n}.
\]
Consider the situation when $(X, K)$ is the natural number $k$ with the natural order and $(Y,L)$ is the natural number
$l$ with the natural order.
Note that directly from \eqref{E:TwistComp}, $\tau\cdot \sigma \ll \tau$, so the following result is an immediate consequence of Proposition~\ref{P:VProduct}.

{\em
Assume we are given $d>0$, and natural numbers $k, l$. Let $A\subseteq {\rm lin}_k$ and $B\subseteq {\rm lin}_l$, and
let $\vec a$, $\vec b$ be anchored sequences of length $p$ of elements of $A$ and $B$, respectively. Then there exist natural numbers $m, n$  and an anchored sequence
$\vec{i}$ of length $p$ of elements of $m$ such that for each $d$-coloring of
${{n}\choose{k}}^m\times \binom{m, \vec{i}}{A, \vec a}_{\rm rs}$
there exists $\tau_0 \in {{n}\choose{l}}^m\times \binom{m, \vec{i}}{B, \vec b}_{\rm rs}$ such that
\begin{equation}\notag
\{ \tau_0\cdot\sigma \colon \sigma \in {{l}\choose{k}}^m\times \binom{B, \vec b}{A, \vec a}_{\rm rs}\}
\end{equation}
is monochromatic.}

Since arbitrary finite linear orders $(X,K)$ and $(Y,L)$ can be identified with $k$ and $l$, respectively, the result above can be restatement as Proposition~\ref{P:TwistProduct} below.

\begin{proposition}\label{P:TwistProduct}
Assume we are given $d>0$, and linear orders $K$, $L$ on finite sets $X$ and $Y$, respectively.
Let $A\subseteq {\rm lin}_K$ and $B\subseteq {\rm lin}_L$, and
let $\vec a$, $\vec b$ be anchored sequences of length $p$ of elements of $A$ and $B$, respectively. Then there exist natural numbers $m, n$  and an anchored sequence
$\vec{i}$ of length $p$ of elements of $m$ such that for each $d$-coloring of ${{n}\choose{|X|}}^m\times \binom{m, \vec{i}}{A, \vec a}_{\rm rs}$
there exists $\tau_0 \in {{n}\choose{|Y|}}^m\times \binom{m, \vec{i}}{B, \vec b}_{\rm rs}$ such that
\begin{equation}\notag
\{ \tau_0\cdot \sigma \colon \sigma \in {{Y}\choose{|X|}}^m\times \binom{B, \vec b}{A, \vec a}_{\rm rs}\}
\end{equation}
is monochromatic.
\end{proposition}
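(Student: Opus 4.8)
The plan is to deduce the proposition directly from Proposition~\ref{P:VProduct}, with the only new ingredient being the twist encoded in the operation $\cdot$ of \eqref{E:TwistComp}. The guiding observation, already noted above, is that this twist does not change which $\ll$-down-set the output lands in: one always has $\tau_0\cdot\sigma\ll\tau_0$. So once Proposition~\ref{P:VProduct} supplies a $\tau_0$ whose down-set is monochromatic, the twisted image will automatically sit inside that down-set.

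First I would record that by Lemma~\ref{L:lin} the subsets $A\subseteq{\rm lin}_K$ and $B\subseteq{\rm lin}_L$ are themselves finite linearly ordered sets, and that $\vec a$, $\vec b$ are anchored sequences in them. Thus, with $k=|X|$ and $l=|Y|$, the hypotheses of Proposition~\ref{P:VProduct} are satisfied, and I apply it to obtain natural numbers $m,n$ and an anchored sequence $\vec i$ of length $p$ in $m$ with the stated partition property. Given any $d$-coloring $\phi$ of $\binom{n}{|X|}^m\times\binom{m,\vec i}{A,\vec a}_{\rm rs}$, Proposition~\ref{P:VProduct} produces $\tau_0=(T_0,\dots,T_{m-1},t)\in\binom{n}{|Y|}^m\times\binom{m,\vec i}{B,\vec b}_{\rm rs}$ such that the down-set $\{\rho\colon\rho\ll\tau_0\}$ is $\phi$-monochromatic.

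It then remains to verify that every $\tau_0\cdot\sigma$ lies in this down-set. Writing $\sigma=(S_0,\dots,S_{m-1},s)\in\binom{Y}{|X|}^m\times\binom{B,\vec b}{A,\vec a}_{\rm rs}$, the $i$-th coordinate of $\tau_0\cdot\sigma$ is $\pi^{\tau_0}_i(S_i)$, which is a subset of $T_i$ since the isomorphism $\pi^{\tau_0}_i$ of \eqref{E:pii} maps $Y$ into $T_i$; and the rigid-surjection coordinate $s\circ t$ factors through $t$ via the witness $r=s\in\binom{B,\vec b}{A,\vec a}_{\rm rs}$. Both clauses in the definition of $\ll$ therefore hold, so $\tau_0\cdot\sigma\ll\tau_0$. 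Consequently $\{\tau_0\cdot\sigma\colon\sigma\in\binom{Y}{|X|}^m\times\binom{B,\vec b}{A,\vec a}_{\rm rs}\}\subseteq\{\rho\colon\rho\ll\tau_0\}$ is monochromatic, which is exactly the conclusion.

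I expect the only real bookkeeping, and the step most prone to slips, to be the reduction from the natural-number case ($X=k$, $Y=l$ with their natural orders) to arbitrary finite linear orders $(X,K)$, $(Y,L)$. Using the unique increasing bijections $X\to|X|$ and $Y\to|Y|$, one transports ${\rm lin}_K$, the sets $A,B$, the anchored sequences, the isomorphisms $\pi^\tau_i$, and the operation $\cdot$ along these identifications. Since all of these notions are defined purely in terms of the given linear orders and the order on $\N$, they are preserved under order isomorphism, and the general statement reduces to the special case handled above. This reduction is routine, so the substance of the argument lies entirely in the inclusion $\tau_0\cdot\sigma\ll\tau_0$ together with Proposition~\ref{P:VProduct}.
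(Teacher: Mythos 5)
Your proof is correct and takes essentially the same route as the paper: the paper likewise deduces Proposition~\ref{P:TwistProduct} from Proposition~\ref{P:VProduct} via the single observation that $\tau_0\cdot\sigma\ll\tau_0$ holds directly from \eqref{E:TwistComp}, with arbitrary finite linear orders handled by identification with natural numbers. The only cosmetic difference is that the paper states the natural-number case $(X,K)=k$, $(Y,L)=l$ first and then transfers, whereas you run the argument in general and note the identification afterward.
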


\section{Certain canonical structures}\label{S:str}

In this section, we define certain concrete structures and prove their basic properties. These structures are essentially the ones we need for the conclusion of 
Theorem~\ref{T}.   

For the remained of this section, $P$ is a partial order on a finite set $Y$, and $L$ is a linear order on $Y$ extending $P$.
Let
\[
{\rm lin}_L(P)\subseteq {\rm lin}_L
\]
be the set of all linear orders of $Y$ extending $P$. The set ${\rm lin}_L(P)$ is equipped with the linear order inherited from ${\rm lin}_L$.
Let $X\subseteq Y$. Note that the linear order $L\res X$ extends the partial order $P\res X$. Define
\[
{\rm res}_X\colon {\rm lin}_L(P)\to {\rm lin}_{L\res X}(P\res X),\; {\rm res}_X(L') = L'\res X.
\]

Now, in addition to $Y$, $P$, and $L$, we fix linear orders $L_1, \dots, L_{p-1}$ on $Y$ that extend $P$, and let
\[
\vec{L} = (L,L_1, \dots, L_{p-1}).
\] By Lemma~\ref{L:lin}, $\vec{L}$ is an anchored sequence in ${\rm lin}_L(P)$. We set \[
\vec{L}\res X= (L\res X, L_1\res X, \dots, L_{p-1}\res X).
\]

The following lemma is essentially \cite[Lemma 14]{PTW}. We include a proof of it for completeness.

\begin{lemma}\label{L:rig}
${\rm res}_X$ is an element of ${{\rm lin}_L(P), \vec {L} \choose {\rm lin}_{L\res X}(P\res X), \vec{L}\res X}_{\rm rs}$.
\end{lemma}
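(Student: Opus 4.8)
The plan is to verify the three defining requirements of membership in $\binom{{\rm lin}_L(P),\vec L}{{\rm lin}_{L\res X}(P\res X),\vec L\res X}_{\rm rs}$ one at a time: that ${\rm res}_X$ respects the anchored sequences, that it is onto, and that it is rigid. The first is immediate, since by definition ${\rm res}_X(L)=L\res X$ and ${\rm res}_X(L_i)=L_i\res X$, which are exactly the entries of $\vec L\res X$; moreover $\vec L$ and $\vec L\res X$ are anchored by Lemma~\ref{L:lin} applied to $L$ on $Y$ and to $L\res X$ on $X$. Throughout I would use the description of the \emph{below} order given when ${\rm lin}_L$ was defined: one linear order is below another exactly when its increasing enumeration of the underlying set is lexicographically smaller, with respect to $L$ (respectively $L\res X$). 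The useful observation is that under ${\rm res}_X$ the $L'\res X$-increasing enumeration of $X$ is simply the subsequence of the $L'$-increasing enumeration of $Y$ formed by the elements of $X$.

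The technical fact I would isolate first, and reuse throughout, is that for any linear order $M$ on $X$ extending $P\res X$ the transitive closure of $P\cup M$ on $Y$ is a strict partial order. Given a putative cycle, consecutive $P$-steps may be merged (as $P$ is transitive) and so may consecutive $M$-steps; any maximal block of $P$-steps joining two elements of $X$ collapses, since $M\supseteq P\res X$, to a single $M$-step, while a cycle avoiding $X$ altogether lies inside $P$. Hence the cycle reduces to one in $P$ or in $M$, both impossible. Surjectivity of ${\rm res}_X$ is then immediate: by Szpilrajn's theorem this strict partial order extends to a linear order $L'$ of $Y$, which extends $P$ and $M$, and since $M$ is already total on $X$ we get $L'\res X=M$.

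For rigidity I would use the first-attainment formulation: it suffices that the map sending $M$ to the below-least element $\mu(M)$ of ${\rm res}_X^{-1}(M)$ be strictly order preserving. I would describe $\mu(M)$ by the greedy rule: enumerate $Y$ by repeatedly appending the $L$-least element all of whose $P$-predecessors are already listed and which, if it lies in $X$, also has all its $M$-predecessors already listed. The acyclicity fact guarantees such an element always exists, so $\mu(M)$ is well defined, extends $P$, and restricts to $M$; an exchange argument at the first place of disagreement shows $\mu(M)$ is the lexicographically, hence below-, least preimage. To compare $\mu(M_1)$ and $\mu(M_2)$ when $M_1$ is below $M_2$, I would run the two greedy constructions in lockstep: letting $j^*$ be the first position where the $M_1$- and $M_2$-orderings of $X$ disagree, the two constructions produce an identical prefix and the same placed set until the element of $X$ in position $j^*$ is about to be placed, because up to that moment the admissibility conditions coincide.

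The main obstacle is the comparison at the first point of divergence, where the partial order genuinely interacts with the linear orders. The only dangerous configuration is that the $M_2$-construction places its position-$j^*$ element $x'$ of $X$ while the $M_1$-construction is forced to emit a non-$X$ element $z$ because its intended position-$j^*$ element $x$ of $X$ — which is $L$-below $x'$ — still has an unplaced $P$-predecessor. I would rule this out as follows. That the $M_2$-construction chose $x'$ over every admissible non-$X$ element means $x'$ is $L$-below $z$. The offending $P$-predecessor of $x$ cannot lie in $X$, for then it would be an $M_1$-predecessor of $x$ and hence already placed; descending to an $L$-least unplaced $P$-ancestor of $x$ therefore yields an admissible non-$X$ element that is $L$-below $x$, hence $L$-below $x'$, hence $L$-below $z$. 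But the two constructions admit exactly the same non-$X$ elements, so the $M_1$-construction would have emitted this strictly smaller element instead of $z$, a contradiction. With this case excluded, at the first point of divergence the $M_1$-construction always emits the $L$-smaller element, so $\mu(M_1)$ is below $\mu(M_2)$ and rigidity follows.
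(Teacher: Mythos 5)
Your proof is correct, and it takes a genuinely different route from the paper's. The paper never constructs the least preimage of an $M_1\in{\rm lin}_{L\res X}(P\res X)$ explicitly: it fixes it abstractly as the below-least $L_1$ with $L_1\res X=M_1$, proves two structural claims about it by order surgery (relocating a single element of $Y$ to produce a preimage strictly below $L_1$, contradicting minimality), and then compares $L_1$ directly with an \emph{arbitrary} preimage $L_2$ of $M_2$; surjectivity of ${\rm res}_X$ is taken for granted there. You instead describe the least preimage $\mu(M)$ by a greedy algorithm, verify that description, and compare the two greedy runs in lockstep, resolving the first divergence by descending to the $L$-least unplaced $P$-predecessor of the blocked element (this descent step, producing a small admissible element outside $X$, is the exact counterpart of the paper's Claim~\ref{Cl2}). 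What your route buys: surjectivity is actually proved rather than assumed (via acyclicity of the transitive closure of $P\cup M$; in fact your greedy construction alone already outputs a preimage, so the appeal to Szpilrajn's theorem is redundant), and the interaction between $P$ and the linear orders is localized to a single step of the algorithm. What the paper's route buys: less bookkeeping, since one never has to verify that the greedy enumeration really is the lexicographic minimum, and a formally stronger comparison, namely that the least preimage of $M_1$ lies below \emph{every} preimage of $M_2$, not just the least one. One small inaccuracy in your acyclicity argument: the right dichotomy is whether the reduced cycle contains an $M$-step or not, rather than whether it avoids $X$ --- a cycle meeting $X$ can still consist entirely of $P$-steps --- but this does not affect the argument, since such a cycle also lies inside $P$.
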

\begin {proof}
By the definition of ${\rm res}_X$, it suffices to show that ${\rm res}_X$ is a rigid surjection from ${\rm lin}_L(P)$ to ${\rm lin}_{L\res X}(P\res X)$.
Recall \eqref{E:cut}.

Fix $L_1, L_2\in {\rm lin}_L(P)$, $M_1\in {\rm lin}_{L\res X}(P\res X)$, and $x,y\in Y$ with  $(L_1)_x= (L_2)_y$.
Assume that $L_1$ is the smallest element of ${\rm lin}_L(P)$ such that $L_1\res X= M_1$.

\begin{claim}\label{Cl1}
If $y\not\in X$, then $xLy$ or $x=y$.
\end{claim}

\noindent Proof of Claim~\ref{Cl1}.  Towards a contradiction, assume that $yLx$. Define a linear order $L_1'$ on $Y$ by
\begin{enumerate}
\item[(a)] $L_1'\res (Y\setminus \{ y\}) = L_1\res (Y\setminus \{ y\})$;

\item[(b)] $y$ is the $L_1'$-immediate predecessor of $x$.
\end{enumerate}

Note that $L_1'$ extends $P$. Indeed, since $L_1$ extends $P$, condition (a) is compatible with $P$. Also we
have
\[
(L_1')_y= (L_1)_x = (L_2)_y.
\]
So for $z\not= y$, if $z\in (L_1')_y$, then $z\in (L_2)_y$, and if $z\not\in (L_1')_y$, then $z\not\in (L_2)_y$, therefore, since $L_2$ extends $P$,
condition (b) is compatible with $P$. Thus, $L_1'\in {\rm lin}_L(P)$. We have that $L_1'$ is below $L_1$ in ${\rm lin}_L(P)$ since $(L_1')_y= (L_1)_x $
and $yLx$. Since $y\not\in X$, $X\subseteq Y\setminus \{ y\}$, so by (a)
\[
L_1'\res  X = L_1\res X = M_1,
\]
contradicting the choice of $L_1$ and proving the claim.

\begin{claim}\label{Cl2} If $x\not\in X$, $xL_1y$, and there is no $z\in X$ with $xL_1zL_1y$, then $xLy$.
\end{claim}

\noindent Proof of Claim~\ref{Cl2}. Note that by assumption $x\not= y$, so if the conclusion fails, then $yLx$. There are $z_1, z_2$ such that
\begin{enumerate}
\item[(i)] $(xL_1z_1\hbox{ or }x=z_1)$ and $z_1L_1y$;

\item[(ii)] $z_1$ is an $L_1$-immediate predecessor of $z_2$;

\item[(iii)] $z_2Lz_1$.
\end{enumerate}
To get such $z_1$ and $z_2$, let $x=v_0, v_1, \dots , v_k=y$ be such that $v_i$ is the $L_1$-immediate predecessor of
$v_{i+1}$ for $i<k$. If for each $i<k$, $v_i L v_{i+1}$, then we would have $xLy$ contradicting $yLx$.
So for some $i<k$, $v_{i+1} L v_i$, and we take $z_2=v_{i+1}$ and $z_1=v_i$. 

Note that, by (i) and by our assumptions,  $z_1\not\in X$.

Define a linear order $L_1'$ on $Y$ by
\begin{enumerate}
\item[(a)] $L_1'\res (Y\setminus \{ z_1\}) = L_1\res (Y\setminus \{ z_1\})$;

\item[(b)] $z_2$ is the $L_1'$-immediate predecessor of $z_1$.
\end{enumerate}

The linear order $L_1'$ extends $P$. Indeed, since $L_1$ extends $P$, condition (a) is compatible with $P$; by (ii) and (iii),
condition (b) is compatible with $P$ as $L$ and $L_1$ extend $P$.
So $L_1'\in {\rm lin}_L(P)$. Since $(L_1')_{z_2} = (L_1)_{z_1}$ and $z_2Lz_1$, $L_1'$ is below $L_1$. Since $z_1\not\in X$,
we get $L_1'\res X = L_1\res X = M_1$ contradicting our choice of $L_1$ and proving the claim.

\smallskip

Now assume that $x\not= y$. Let $M_2= L_2\res X$ and assume that $M_1$ is below $M_2$ in ${\rm lin}_{L\res X}(P\res X)$. We
need to show that $L_1$ is below $L_2$ in ${\rm lin}_L(P)$.

If $y\not \in X$, by Claim~\ref{Cl1}, we have $xLy$, so $L_1$ is below $L_2$, as required.

So assume $y\in X$. If $x\in X$, then $(M_1)_x = (M_2)_y$ and $x\not= y$. So $xLy$ by our assumption that $M_1$ is below $M_2$.
Thus, $L_1$ is below $L_2$ as required.

So assume that $y\in X$ and $x\not\in X$. Let $y'\in X$ be such that $xL_1y'$ and $z\not\in X$ for all $xL_1zL_1y'$. Such a $y'$ exists
since $xL_1y$ (as $(L_1)_x = (L_2)_y$ and $x\not= y$) and $y\in X$. By Claim~\ref{Cl2}, $xLy'$. If $yLx$, then $yLy'$. Note that
$(M_1)_{y'} =(M_2)_y$ since $(L_1)_x = (L_2)_y$. So we have that $M_2$ is below $M_1$,
contradiction. Thus, $xLy$ and $L_1$ is below $L_2$, as required.
\end {proof}

The set $\N$ is equipped with its natural linear order, which we denote by $<$. Let $m$ be a natural number.
We define a partial order $<_{\rm pr}$ on $\N^m$ by letting
\[
(k_0, \dots, k_{m-1})<_{\rm pr} (l_0, \dots, l_{m-1})
\]
if and only if $k_i<l_i$ for each $0\leq i< m$. For $i<m$, let
$<_{{\rm lx},i}$ be the linear order in $\N^m$ defined by letting
\[
(k_0, \dots, k_{m-1})<_{{\rm lx},i} (l_0, \dots, l_{m-1})
\]
if and only if
there exists $j\geq 0$ such that $k_{i+_m j}<l_{i+_m j}$ and  $k_{i+_m j'}=l_{i+_m j'}$ for all $0\leq j'<j$, where $+_m$ stands for addition modulo $m$.
In particular, $<_{{\rm lx},0}$ is the usual lexicographic order.
Note that each $<_{{\rm lx},i}$ extends $<_{\rm pr}$.

Fix an anchored sequence
\[
\vec{i} = (i_0, \dots, i_{p-1}).
\]
of elements of $m$. Let
\[
\vec{<}_{{\rm lx}, \vec{i}} = (<_{{\rm lx}, i_0}, \dots, <_{{\rm lx}, i_{p-1}}).
\]
Then
\[
(\N^m, <_{\rm pr}, \vec{<}_{{\rm lx}, \vec{i}})
\]
is a structure.

Let $\tau \in {{\N}\choose{|Y|}}^m\times \binom{m, \vec{i}}{{\rm lin}_L(P), \vec{L}}_{\rm rs}$. Recall \eqref{E:pii} and define
\begin {equation}\label{E:pitau}
\pi^\tau\colon Y\to \N^m,\; \pi^\tau(y) = (\pi^\tau_0(y), \dots, \pi^\tau_{m-1}(y)).
\end {equation}

\begin{lemma}\label{L:two}
\begin{enumerate}
\item[(i)] $\pi^\tau$ is an embedding from $(Y, P, \vec{L})$ to $(\N^m, <_{\rm pr}, \vec{<}_{{\rm lx}, \vec{i}})$.

\item[(ii)] Let $X'\subseteq \pi^\tau(Y)$. Then, for $X= (\pi^\tau)^{-1}(X')$, we have
\[
X'= \pi^{\tau\cdot \sigma}(X),
\]
for some $\sigma \in   {{Y}\choose{|X|}}^m\times \binom{{\rm lin}_{L}(P), \vec{L}}{{\rm lin}_{L\res X}(P\res X), \vec{L}\res X}_{\rm rs}$.
\end{enumerate}
\end{lemma}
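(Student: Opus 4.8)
The plan is to analyze everything through the component isomorphisms $\pi^\tau_i\colon(Y,t(i))\to(T_i,<\res T_i)$ of \eqref{E:pii}, using two elementary facts about them. First, for $y_1,y_2\in Y$ one has $\pi^\tau_i(y_1)<\pi^\tau_i(y_2)$ precisely when $y_1\,t(i)\,y_2$. Second, each $\pi^\tau_i$ is injective, so $\pi^\tau_i(y_1)=\pi^\tau_i(y_2)$ forces $y_1=y_2$; consequently $\pi^\tau$ is injective and, for distinct $y_1,y_2$, the coordinates of $\pi^\tau(y_1)$ and $\pi^\tau(y_2)$ differ in every position. These two observations carry part (i).

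For the partial order, $\pi^\tau(y_1)<_{\rm pr}\pi^\tau(y_2)$ says that $y_1\,t(i)\,y_2$ for all $i<m$. Since $t$ is a rigid surjection onto ${\rm lin}_L(P)$, the family $\{t(i)\colon i<m\}$ is exactly the set of all linear extensions of $P$, so this condition means $y_1$ precedes $y_2$ in every linear extension of $P$. As a finite partial order is the intersection of its linear extensions (Szpilrajn's theorem), this is equivalent to $y_1Py_2$. For the linear order $<_{{\rm lx},i_j}$ with $j<p$, take distinct $y_1,y_2$: by the second observation their $i_j$-th coordinates already differ, and since $<_{{\rm lx},i_j}$ starts its comparison at coordinate $i_j$, the comparison is settled there. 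Thus $\pi^\tau(y_1)<_{{\rm lx},i_j}\pi^\tau(y_2)$ iff $y_1\,t(i_j)\,y_2$, and the anchoring $t(i_j)=L_j$ rewrites this as $y_1L_jy_2$. This proves (i).

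For part (ii) I would take the witness $\sigma=(X,\dots,X,{\rm res}_X)$, that is $S_i=X$ for every $i<m$ and $s={\rm res}_X$. Then $S_i\in\binom{Y}{|X|}$, and Lemma~\ref{L:rig} says precisely that $s\in\binom{{\rm lin}_L(P),\vec L}{{\rm lin}_{L\res X}(P\res X),\vec L\res X}_{\rm rs}$, so $\sigma$ belongs to the product in the statement. Expanding \eqref{E:TwistComp}, $\tau\cdot\sigma=(\pi^\tau_0(X),\dots,\pi^\tau_{m-1}(X),\,{\rm res}_X\circ t)$; its $i$-th subset of $\N$ is $\pi^\tau_i(X)$ and its $i$-th linear order is $({\rm res}_X\circ t)(i)={\rm res}_X(t(i))=t(i)\res X$.

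Finally I would compare $\pi^{\tau\cdot\sigma}$ with $\pi^\tau$ coordinatewise. The map $\pi^{\tau\cdot\sigma}_i$ is by definition the order isomorphism $(X,t(i)\res X)\to(\pi^\tau_i(X),<\res\pi^\tau_i(X))$, and the restriction $\pi^\tau_i\res X$ is an order isomorphism between the same two finite linear orders; by uniqueness they are equal. Hence $\pi^{\tau\cdot\sigma}(x)=\pi^\tau(x)$ for every $x\in X$, so $\pi^{\tau\cdot\sigma}(X)=\pi^\tau(X)$, which equals $X'$ because $\pi^\tau$ is injective, $X=(\pi^\tau)^{-1}(X')$, and $X'\subseteq\pi^\tau(Y)$. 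The only non-formal ingredient is the Szpilrajn fact for $P$ in part (i); the rest is bookkeeping with the isomorphisms $\pi^\tau_i$, the key move in (ii) being the recognition that the restriction operator ${\rm res}_X$ of Lemma~\ref{L:rig} is exactly the rigid-surjection component needed to make $\tau\cdot\sigma$ reproduce $\pi^\tau$ on $X$.
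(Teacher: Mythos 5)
Your proof is correct and takes essentially the same route as the paper's: Szpilrajn's theorem together with the surjectivity of $t$ onto ${\rm lin}_L(P)$ handles $<_{\rm pr}$, the anchoring $t(i_j)=L_j$ handles the orders $<_{{\rm lx},i_j}$, and in (ii) your witness $\sigma=(X,\dots,X,{\rm res}_X)$ coincides with the paper's $S_i=(\pi^\tau_i)^{-1}(p_i(X'))$ (these sets equal $X$), with the paper's ``direct computation'' $\pi^{\tau\cdot\sigma}_i=\pi^\tau_i\res X$ justified exactly as you do, by uniqueness of the order isomorphism.
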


\begin{proof} (i) Since each partial order is the intersection of all the linear orders containing it, we have that, for $y_1, y_2\in Y$,
\[
y_1Py_2\Longleftrightarrow y_1t(i)y_2 \hbox{ for all } i< m.
\]
It follows that $\pi^\tau$ preserves $P$. Since
\[
t(0)= L,\, t(i_1)=L_1,\dots ,\, t(i_{p-1}) = L_{p-1}
\]
we see that $\pi^\tau$ preserves each linear order in $\vec{L}$.

(ii) Let
\[
\sigma = ((\pi^\tau_0)^{-1}(p_0(X')), \dots, (\pi^\tau_{m-1})^{-1}(p_{m-1}(X')), {\rm res}_X)
\]
where $p_i$, $i< m$, is the $i$-th projection from $\N^m$ to $\N$. By Lemma~\ref{L:rig}, we have
$\sigma \in {{Y}\choose{|X|}}^m\times \binom{{\rm lin}_{L}(P), \vec{L}}{{\rm lin}_{L\res X}(P\res X), \vec{L}\res X}_{\rm rs}$.
The remainder of the conclusion, follows from the observation, made by a direct computation, that for $i<m$
\[
\pi_i^{\tau\cdot \sigma} = \pi_i^\tau\res X. \qedhere
\]
\end{proof}

\section{Proof of Theorem~\ref{T}}\label{S:pr}

Let ${\mathcal X}= (X, P^X, \vec{L}^X)$ and ${\mathcal Y} = (Y, P^Y, \vec{L}^Y)$ be given. We assume, as we can, that $\mathcal X$ is a substructure of
$\mathcal Y$. Fix the number of colors $d$. Set $K=L_0^X$, $L= L_0^Y$,
$A={\rm lin}_{L_0^X}(P^X)$, $B= {\rm lin}_{L_0^Y}(P^Y)$, $\vec{a}= \vec{L}^X$ and
$\vec{b}= \vec{L}^Y$. Apply Proposition~\ref{P:TwistProduct} to this data obtaining
$m, n$ and $\vec{i}$. We claim that the structure
\[
(n^m, <_{\rm pr}\res n^m, \vec{<}_{{\rm lx}, \vec{i}}\res n^m)
\]
does the job. Color with $d$ colors all substructures of this structure isomorphic to $(X, P^X, \vec{L}^X)$.
By Lemma~\ref{L:two}(i), this induces a coloring of all $\sigma \in {{n}\choose{|X|}}^m\times \binom{m, \vec{i}}{A,\vec{a} }_{\rm rs}$ by coloring
$\sigma$ with the color
of the structure $\pi^\sigma(X)$. By our choice of $m$, $n$, and $\vec{i}$, there exists $\tau_0 \in  {{n}\choose{|Y|}}^m\times
\binom{m, \vec{i}}{B ,\vec{b} }_{\rm rs}$ such that all $\tau_0 \cdot \sigma$,
with $\sigma \in {{Y}\choose{|X|}}^m\times \binom {B,\vec{b}}{A,\vec{a}}_{\rm rs}$,
get the same color. Consider the structure
\[
\pi^{\tau_0}(Y) \subseteq n^m.
\]
By Lemma~\ref{L:two}(i), it is isomorphic to $(Y, P^Y, \vec{L}^Y)$. By Lemma~\ref{L:two}(ii), each substructure of $\pi^{\tau_0}(Y)$ that is isomorphic to $(X, P^X, \vec{L}^X)$ is of
the form $\pi^{\tau_0\cdot \sigma}(X)$ for $\sigma \in {{Y}\choose{|X|}}^m\times \binom {B,\vec{b}}{A,\vec{a} }_{\rm rs}$.
So all of them have the same color.

\section {On the relationship between Propositions \ref{P:VProduct} and \ref{P:TwistProduct} and Theorem \ref{T}}\label{S:redo}

The arguments in Sections~\ref{S:str} and \ref{S:pr} show that Theorem \ref{T} is, in a sense, a translation of Proposition~\ref{P:TwistProduct},
which is, in a sense, a particular case of  Proposition~\ref{P:VProduct}.
In the present section, we make the notion of translation mathematically precise using a variation of the concept of interpretation from \cite{Sol}. 
Even though this material is not necessary for understanding the proof of Theorem~\ref{T} as presented in the previous sections, it seems 
worthwhile to place this proof in a broader context. 

As argued in \cite{Sol},
many particular Ramsey statements are instances of a general Ramsey statement formulated for certain algebraic structures. Interpretation
is a precise notion of ``homomorphism" that allows one to transfer the Ramsey statement from one such algebraic structure to another. We explain 
details of this setup below. Further, we define such algebraic structures for the statements in Proposition~\ref{P:TwistProduct} and Theorem~\ref{T} 
and show that the first one interprets the second one. So Propositions~\ref{P:VProduct} and \ref{P:TwistProduct} are the Ramsey theoretic essence 
of the main result Theorem~\ref{T}.

Consider a set $A$ with a partial function from $A\times A$ to $A$: $(a,b)\to a\cdot b$.
Let $\mathcal F$ and $\mathcal R$ be families of subsets of $A$. Let $(F,R)\to F\bullet R$ be a function whose domain is a subset of
$\mathcal F\times \mathcal R$, whose values are subsets of $A$, and which is such that whenever $F\bullet R$ is defined, then $f\cdot r$ is defined for all $f\in F$
and $r\in R$ and $F\bullet R=\{f\cdot r\colon f\in F, r\in R \}$. We say that $(\mathcal F,\mathcal R,\bullet)$ is a {\em pair of families over} $(A, \cdot)$.

Let $(\mathcal F,\mathcal R,\bullet)$ and $(\mathcal G,\mathcal S,\bullet)$ be pairs of families over $(A,\cdot)$ and $(B,\cdot)$, respectively. We say that
$S\in {\mathcal S}$ is {\em interpretable in} $(\mathcal F,\mathcal R)$ if there exists $R\in \mathcal R$ and a function $\alpha:S\to R$ such that
if $F\bullet R$ is defined for $F\in \mathcal F$, then there exists $G\in \mathcal G$ with $G\bullet S$ defined, and a function $\phi:F\to G$
such that  for  $f_1,f_2\in F$ and $s_1,s_2\in S$,
\begin {equation} \label {E:interpretation}
f_1\cdot \alpha(s_1)=f_2\cdot \alpha(s_2)\Longrightarrow \phi(f_1)\cdot s_1=\phi(f_2)\cdot s_2.
\end {equation}

Now, we formulate the Ramsey condition for a pair of families.
Let $(\mathcal F,\mathcal R,\bullet)$ be a pair of  families and let $d>0$. We say the $d$-{\em Ramsey condition} holds for $(\mathcal F,\mathcal R,\bullet)$ if
for each $R\in \mathcal R$, there exists $F\in \mathcal F$ such that for each $d$-coloring of $F\bullet R$, there exists $f\in F$ with $\{f\cdot r:r\in R\}$ is monochromatic.

The following proposition can be checked without difficulty.
\begin {proposition}\label {T:Weakinterpretation}
Let $(\mathcal F,\mathcal R,\bullet)$ and $(\mathcal G,\mathcal S,\bullet)$ be pairs of families, and let $d>0$.
If the $d$-Ramsey condition holds for $(\mathcal F,\mathcal R,\bullet)$ and each $S\in \mathcal S$ is interpretable in $(\mathcal F,\mathcal R,\bullet)$,
then the $d$-Ramsey condition holds for $(\mathcal G,\mathcal S,\bullet)$.
\end {proposition}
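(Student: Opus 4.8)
The plan is to unwind the definitions and show directly that the Ramsey condition for $(\mathcal G,\mathcal S,\bullet)$ follows from the hypotheses by a straightforward transfer argument. Fix $d>0$ and fix an arbitrary $S\in\mathcal S$; we must produce $G\in\mathcal G$ witnessing the $d$-Ramsey condition for $S$. First I would invoke interpretability of $S$ in $(\mathcal F,\mathcal R,\bullet)$ to obtain $R\in\mathcal R$ and a function $\alpha\colon S\to R$ with the interpretation property \eqref{E:interpretation}. Then I would apply the $d$-Ramsey condition for $(\mathcal F,\mathcal R,\bullet)$ to this particular $R$, obtaining some $F\in\mathcal F$ such that every $d$-coloring of $F\bullet R$ has an $f\in F$ with $\{f\cdot r\colon r\in R\}$ monochromatic. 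Finally, interpretability applied to this $F$ (for which we may assume $F\bullet R$ is defined, since otherwise the Ramsey conclusion for $F$ is vacuous and one simply shrinks to an $F$ where it is defined) yields $G\in\mathcal G$ with $G\bullet S$ defined and a function $\phi\colon F\to G$ satisfying \eqref{E:interpretation}. This $G$ is the witness we take for the $d$-Ramsey condition for $S$.

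The core of the argument is then to verify that $G$ works. Given an arbitrary $d$-coloring $c$ of $G\bullet S$, I would pull it back along the maps to a $d$-coloring $c'$ of $F\bullet R$. Concretely, since $F\bullet R=\{f\cdot r\colon f\in F,\ r\in R\}$, I define $c'(f\cdot\alpha(s))$ in terms of $c(\phi(f)\cdot s)$. The implication \eqref{E:interpretation} is exactly what makes this pullback well-defined: if two pairs $(f_1,s_1)$ and $(f_2,s_2)$ give the same element $f_1\cdot\alpha(s_1)=f_2\cdot\alpha(s_2)$ of $F\bullet R$, then \eqref{E:interpretation} forces $\phi(f_1)\cdot s_1=\phi(f_2)\cdot s_2$ in $G\bullet S$, so they receive the same $c$-color and hence a consistent $c'$-color. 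Elements of $F\bullet R$ not of the form $f\cdot\alpha(s)$ may be colored arbitrarily, since they will not interfere with the conclusion. Applying the Ramsey property of $F$ to $c'$ gives $f\in F$ such that $\{f\cdot r\colon r\in R\}$ is $c'$-monochromatic; restricting attention to $r=\alpha(s)$ for $s\in S$, the set $\{f\cdot\alpha(s)\colon s\in S\}$ is $c'$-monochromatic. By the definition of $c'$, this means $\{\phi(f)\cdot s\colon s\in S\}$ is $c$-monochromatic. Setting $g=\phi(f)\in G$ gives the required element of $G$ with $\{g\cdot s\colon s\in S\}$ monochromatic, completing the verification.

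The one point requiring care, which I regard as the main obstacle, is the well-definedness of the pullback coloring $c'$ and the bookkeeping around whether $F\bullet R$ is defined. The definition of interpretability only guarantees a $\phi$ and $G$ \emph{when} $F\bullet R$ is defined, so one must be careful to order the quantifiers so that the $F$ supplied by the Ramsey condition is one for which $F\bullet R$ is in fact defined; this is automatic because the Ramsey conclusion already references colorings of $F\bullet R$, presupposing it is a genuine set of the stated form. The subtle part is genuinely the implication \eqref{E:interpretation}: it is stated as a one-directional implication, and one must check that this direction is precisely the one needed to transport a coloring \emph{backwards} from $G\bullet S$ to $F\bullet R$ consistently. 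Everything else is a routine chase through the definitions of $\bullet$ and of the Ramsey condition, so the proof is short once the pullback is set up correctly.
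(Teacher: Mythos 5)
Your proof is correct: the pullback coloring defined by $c'(f\cdot\alpha(s))=c(\phi(f)\cdot s)$ is well-defined precisely because of the implication \eqref{E:interpretation}, and taking $g=\phi(f)$ for the $f$ supplied by the $d$-Ramsey condition for $(\mathcal F,\mathcal R,\bullet)$ completes the verification. The paper itself omits the argument (stating only that the proposition ``can be checked without difficulty''), and yours is exactly the intended one, including the correct reading that the Ramsey condition presupposes $F\bullet R$ is defined, which is what licenses invoking interpretability to obtain $G$ and $\phi$.
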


From now on, we fix $d$, the number of colors.

\textbf{A pair of families for Proposition \ref {P:TwistProduct}.}
Let $A_1$ consist of all $\tau$ belonging to ${\N \choose{l}}^m\times \binom{C,\vec{c}} {B,\vec b}_{\rm rs}$ for some natural numbers $l$ and  $m$, $B\subseteq {\rm lin}_l$,
a linearly ordered set $C$, and
anchored sequences $\vec b$ and $\vec c$ of length $p$ of elements of $B$ and $C$, respectively.
If $\sigma, \tau\in A_1$, then $\tau\cdot \sigma$ is defined precisely when
$\sigma \in {l \choose {k}}^{m}\times \binom{B, \vec{b}}{A, \vec{a}}_{\rm rs}$ and $\tau\in {\N \choose{l}}^{m}\times \binom{m,\vec{i}} {B,\vec b}_{\rm rs}$, and $\tau\cdot \sigma$ is
defined by formula \eqref{E:TwistComp}.

Let $\mathcal F_1$ consist of all sets of the form $F= {n \choose{l}}^q\times \binom{q,\vec i} {C,\vec c}_{\rm rs}$ for some natural numbers $l\leq n$ and  $q$, $C\subseteq {\rm lin}_l$,
and anchored sequences $\vec i$ and $\vec c$ of length $p$ of elements of $q$ and $C$, respectively.
Let ${\mathcal S}_1^d$ consist of all sets of the form $S= \binom{r}{k}^m\times \binom{B, {\vec b}}{A, {\vec a}}_{\rm rs}$ for some natural numbers $k\leq r$, $m$, $A\subseteq {\rm lin}_k$ and
$B\subseteq {\rm lin}_r$ and anchored sequences $\vec a$, $\vec b$ of length $p$ of elements of $A$ and $B$, respectively, where $m$ is large enough so that the Dual Ramsey Theorem
with Constants, as stated in Section~\ref{S:prodth}, holds with $d$ colors for $m$, $(A, \vec{a})$ and $(B, \vec{b})$.
For $F\in \mathcal F_1$ and $S\in {\mathcal S}_1^d$ as above, $F\bullet S$ is defined when $r=l$, $q=m$, and $(B, {\vec b}) = (C, {\vec c})$, and is then equal to
$\{\tau \cdot \sigma\colon \tau\in F, \sigma \in S \}$.

Following the proof of Proposition~\ref{P:TwistProduct} one gets Proposition~\ref{T:VProduct} below.
\begin {proposition}\label {T:VProduct}
The $d$-Ramsey condition holds for $(\mathcal F_1,{\mathcal S}_1^d,\bullet)$.
\end {proposition}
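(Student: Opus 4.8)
The plan is to carry out, within the pair-of-families formalism, the very argument that derived Proposition~\ref{P:TwistProduct} from Proposition~\ref{P:VProduct}, the only new point being that the parameter $m$ is now handed to us by $S$ instead of being chosen freely. Fix an arbitrary $S = \binom{r}{k}^m \times \binom{B, \vec b}{A, \vec a}_{\rm rs} \in \mathcal S_1^d$. The task is to produce one $F \in \mathcal F_1$ with $F\bullet S$ defined such that every $d$-coloring of $F\bullet S$ admits a $\tau_0 \in F$ for which $\{\tau_0\cdot\sigma \colon \sigma\in S\}$ is monochromatic.

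First I would determine the parameters of $F$. Since $F\bullet S$ must be defined, I am forced to take $l=r$, $q=m$, $C=B$, and $\vec c=\vec b$; what remains free is the top index $n\geq r$ and the anchored sequence $\vec i$ of length $p$ of elements of $m$. This is exactly where the definition of $\mathcal S_1^d$ is used: membership of $S$ in $\mathcal S_1^d$ means precisely that $m$ is large enough for the Dual Ramsey Theorem with Constants to hold with $d$ colors for $m$, $(A,\vec a)$, $(B,\vec b)$, so I may fix an anchored $\vec i$ of length $p$ of elements of $m$ witnessing that conclusion. I then choose $n\geq r$ large enough that the Product Ramsey Theorem holds for $n$ with parameters $k$, $r$, $m$ and $d^{|\binom{m,\vec i}{A,\vec a}_{\rm rs}|}$ colors, and set $F=\binom{n}{r}^m\times\binom{m,\vec i}{B,\vec b}_{\rm rs}$. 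Then $F\in\mathcal F_1$ and $F\bullet S$ is defined.

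With $F$ in hand, the Ramsey step is a transcription of the proof of Proposition~\ref{P:VProduct} read through the twisted composition. Given a $d$-coloring of $F\bullet S$, I note that $F\bullet S\subseteq\binom{n}{k}^m\times\binom{m,\vec i}{A,\vec a}_{\rm rs}$, since for $\tau\in F$ and $\sigma\in S$ each coordinate $\pi^\tau_i(S_i)$ is a $k$-element subset of the $r$-element set $T_i\subseteq n$ and the last coordinate is $s\circ t\in\binom{m,\vec i}{A,\vec a}_{\rm rs}$; I extend the coloring arbitrarily to all of $\binom{n}{k}^m\times\binom{m,\vec i}{A,\vec a}_{\rm rs}$. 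Now I run the two steps of the proof of Proposition~\ref{P:VProduct}: condense the coloring to a $d^{|\binom{m,\vec i}{A,\vec a}_{\rm rs}|}$-coloring of $\binom{n}{k}^m$, apply the Product Ramsey Theorem to get $(T_0,\dots,T_{m-1})$ on which the color of $(S_0,\dots,S_{m-1},s)$ depends on $s$ alone, and then apply the Dual Ramsey Theorem with Constants for $(m,\vec i)$ to obtain $t\in\binom{m,\vec i}{B,\vec b}_{\rm rs}$ with $\{s\circ t\colon s\in\binom{B,\vec b}{A,\vec a}_{\rm rs}\}$ monochromatic. Setting $\tau_0=(T_0,\dots,T_{m-1},t)\in F$, the identity \eqref{E:TwistComp} gives $\tau_0\cdot\sigma\ll\tau_0$ for every $\sigma\in S$, so all of $\{\tau_0\cdot\sigma\colon\sigma\in S\}$ lands in the monochromatic set and is therefore monochromatic.

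The one place to argue carefully — and the only real obstacle — is the split of the two Dual-Ramsey parameters between $S$ and $F$: I must point out that the condition defining $\mathcal S_1^d$ is exactly the hypothesis the Dual Ramsey Theorem with Constants imposes on $m$, which guarantees a usable $\vec i$ that can then be legitimately folded into the choice of $F$. Once this is observed, the remainder is a verbatim reuse of the proofs of Propositions~\ref{P:VProduct} and \ref{P:TwistProduct}, with no additional combinatorial content.
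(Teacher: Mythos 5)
Your proposal is correct and follows essentially the same route as the paper, whose entire proof is the remark that one obtains the proposition by following the proof of Proposition~\ref{P:TwistProduct} (itself a reformulation of Proposition~\ref{P:VProduct}); you have simply written out the details that remark leaves implicit. In particular, you correctly identified the one genuine point of care — that $m$ now comes fixed with $S$, and that the largeness clause in the definition of ${\mathcal S}_1^d$ is exactly what supplies a usable $\vec i$, which can then be absorbed into the choice of $F$ — which is precisely why the paper defines ${\mathcal S}_1^d$ with that clause.
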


\textbf{A pair of families for Theorem \ref{T}.}
Let $A_2$ consist of all embeddings between structures of the form $(X, P^X, \vec{L}^X)$ as in Section~\ref{S:theo}. 
For $f,g\in A_2$, $f\cdot g$ is defined precisely when the domain structure of $f$ is equal to the range structure of $g$ and 
then we let $f\cdot g = f\circ g$.

Let $\mathcal F_2$ consist of all sets $F={{n^m,<_{\rm pr},<_{{\rm lx},\vec i}}\choose{Y,P^Y,\vec L^Y}}$, and let $\mathcal S_2$
consist of all
$S= {{Z,P^Z,\vec L^Z}\choose {X,P^X,\vec L^X}}$. For $F\in \mathcal F_2$ and $S\in \mathcal S_2$ as above, $F\bullet S$ is defined precisely when
$(Y,P^Y,\vec L^Y)=(Z,P^Z,\vec L^Z)$ and is then equal to $\{f\cdot g\colon f\in F, g\in S \}$.

\begin {proposition}\label {P:Weakinterpretation}
For each $d>0$, each $S\in \mathcal S_2$ is interpretable in $(\mathcal F_1,{\mathcal S}^d_1,\bullet)$.
\end {proposition}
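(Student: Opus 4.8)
The plan is to unwind the definition of interpretability (the condition around \eqref{E:interpretation}) and produce, for a given $S\in\mathcal S_2$, an explicit member $R\in\mathcal S_1^d$ and an explicit map $\alpha\colon S\to R$, and then to exhibit, for each $F\in\mathcal F_1$ with $F\bullet R$ defined, a member $G\in\mathcal F_2$ and a map $\phi\colon F\to G$ satisfying the implication \eqref{E:interpretation}. The whole point is that Sections~\ref{S:str} and \ref{S:pr} have already built the dictionary translating between the combinatorial objects of Proposition~\ref{P:TwistProduct} and the embeddings of structures in Theorem~\ref{T}; so this proof is essentially a matter of packaging that dictionary into the formal language of pairs of families.

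First I would fix $S={{Z,P^Z,\vec L^Z}\choose{X,P^X,\vec L^X}}\in\mathcal S_2$, so $S$ is the set of embeddings of $(X,P^X,\vec L^X)$ into $(Z,P^Z,\vec L^Z)$. The natural choice, dictated by the constructions in Section~\ref{S:str}, is to set $k=|X|$, $r=|Z|$, $A={\rm lin}_{L_0^X}(P^X)$, $B={\rm lin}_{L_0^Z}(P^Z)$, $\vec a=\vec L^X$, $\vec b=\vec L^Z$, and to pick $m$ large enough that the Dual Ramsey Theorem with Constants holds for $m,(A,\vec a),(B,\vec b)$; this gives $R=\binom{r}{k}^m\times\binom{B,\vec b}{A,\vec a}_{\rm rs}\in\mathcal S_1^d$. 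The map $\alpha\colon S\to R$ must send an embedding $e\colon X\to Z$ to the pair $\sigma=(S_0,\dots,S_{m-1},s)$ recording how the linear-order extensions restrict along $e$: concretely $s={\rm res}$ is the restriction rigid surjection ${\rm lin}_{L_0^Z}(P^Z)\to{\rm lin}_{L_0^X}(P^X)$ from Lemma~\ref{L:rig} (suitably transported through $e$), and the sets $S_i$ encode the image $e(X)$ inside $Z$ under the coordinate projections, exactly as in the definition of $\sigma$ in the proof of Lemma~\ref{L:two}(ii).

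Next, for $F\in\mathcal F_1$ with $F\bullet R$ defined, the matching conditions ($r=l$, $q=m$, $(B,\vec b)=(C,\vec c)$) force $F={n\choose r}^m\times\binom{m,\vec i}{B,\vec b}_{\rm rs}$ for some $n$; I would take $G={{n^m,<_{\rm pr},\vec{<}_{{\rm lx},\vec i}}\choose{Z,P^Z,\vec L^Z}}\in\mathcal F_2$, which is nonempty and well-defined because Lemma~\ref{L:two}(i) guarantees that each $\tau\in F$ yields an embedding $\pi^\tau$ of $(Z,P^Z,\vec L^Z)=(Y,P^Y,\vec L^Y)$ into $(n^m,<_{\rm pr},\vec{<}_{{\rm lx},\vec i})$. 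The map $\phi\colon F\to G$ is then simply $\phi(\tau)=\pi^\tau$, using \eqref{E:pitau}. The implication \eqref{E:interpretation} to verify reads: if $\tau_1\cdot\alpha(e_1)=\tau_2\cdot\alpha(e_2)$ then $\pi^{\tau_1}\circ e_1=\pi^{\tau_2}\circ e_2$. This is precisely where the identity $\pi_i^{\tau\cdot\sigma}=\pi_i^\tau\res X$ from the proof of Lemma~\ref{L:two}(ii) does the work: $\tau\cdot\alpha(e)$ is built so that $\pi^{\tau\cdot\alpha(e)}=\pi^\tau\circ e$ (as maps on $X$), whence equality of the left-hand products gives equality of the composites $\pi^\tau\circ e$ on $X$, which is exactly $\phi(\tau)\cdot e=\pi^\tau\circ e$.

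The main obstacle I expect is bookkeeping rather than mathematical depth: I must check that $\alpha$ lands in $R$ (i.e. that the restriction map is genuinely a rigid surjection anchored correctly — this is Lemma~\ref{L:rig}), that $G\bullet S$ is defined (the matching conditions in $\mathcal F_2,\mathcal S_2$ hold because the domain/range structures line up by construction), and above all that the identification $\pi^{\tau\cdot\alpha(e)}=\pi^\tau\circ e$ is literally correct under the definitions of $\cdot$ via \eqref{E:TwistComp} and of $\alpha$. The delicate point is that $\tau\cdot\sigma$ in \eqref{E:TwistComp} applies the isomorphisms $\pi^\tau_i$ to the sets $S_i$, so I need $\alpha(e)$ to record $e(X)$ through the projections $p_i$ in precisely the way that makes $\pi^\tau_i(S_i)$ match $\pi^\tau_i\circ e$; carrying this through cleanly — and confirming that the rigid-surjection coordinate $s\circ t$ of $\tau\cdot\alpha(e)$ corresponds to the composite order-restriction realized by $\pi^\tau\circ e$ — is the one step that genuinely requires unpacking the definitions carefully, though Lemma~\ref{L:two} already contains the computation in all but name.
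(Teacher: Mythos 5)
Your proposal is correct and takes essentially the same approach as the paper's proof: the same $R$ and the same $\alpha$ (coordinate sets equal to the image of the embedding, rigid surjection equal to the restriction map of Lemma~\ref{L:rig} transported by the unique isomorphism), the same $G$ and $\phi(\tau)=\pi^\tau$, and the same key identity $\pi^{\tau\cdot\alpha(s)}=\phi(\tau)\cdot s$, which immediately yields \eqref{E:interpretation}. The only difference is cosmetic bookkeeping, which you anticipated: the paper first identifies $(X,P^X,\vec L^X)$ and the larger structure with structures on $k=|X|$ and $l=|Y|$ whose first linear orders are the natural ones, so that all coordinate sets of $\alpha(s)$ are literally $s[k]$ and the membership $\alpha(s)\in R$ follows directly from Lemma~\ref{L:rig}.
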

\begin {proof}
Let $S\in \mathcal S_2$ be ${{Y,P^Y,\vec L^Y}\choose {X,P^X,\vec L^X}}$.
Set $k=|X|$ and $l=|Y|$.
Observe that we can assume $(X,P^X,\vec L^X)=(k,P^k,\vec L^k)$ and $(Y,P^Y,\vec L^Y)=(l,P^l,\vec L^l)$ where $P^k,P^l$ are partial orders on $k,l$ and $\vec{L}^k,\vec{L}^l$
are sequences of linear orders of length $p$ extending $P^k,P^l$ with $L_0^k=<\upharpoonright k$, $L_0^l=<\upharpoonright l$,  respectively. Fix $m$ such that
$R={l \choose k}^m\times \binom {{\rm lin}_{L_0^l}(P^l),\vec L^l}{{\rm lin}_{L_0^k}(P^k),\vec L^k}_{\rm rs}$ is in ${\mathcal S}^d_1$. Define $\alpha: S\to R$ by letting, for $s\in S$,
\begin {equation}\label {E:alpha}
\alpha(s)=(s[k],\dots,s[k],r\circ {\rm res}_{s[k]})
\end {equation}where $r:{\rm lin}_{L_0^l\res s[k]}(P^l\res s[k])\to {\rm lin}_{L_0^k}(P^k)$ is the unique isomorphism. By Lemma~\ref{L:rig}, $\alpha(s)\in R$.

Assume $F\bullet R$ is defined. Then $F= {n \choose{l}}^m\times \binom {m,\vec i} {{\rm lin}_{L_0^l}(P^l),\vec L^l}_{\rm rs}$ for some $n$
and an anchored sequence $\vec i$ of length $p$ of elements of $m$.
Let $G= \binom {n^m,<_{\rm pr},<_{{\rm lx},\vec i }}{l,P^l,\vec L^l}$. So $G\bullet S$ is defined. Define
$\phi\colon F\to G$ by $\phi(\tau)=\pi^\tau$, where $\pi^\tau$ is as in \eqref{E:pitau}. Note that by Lemma \ref{L:two}(i), $\pi^\tau\in G$.

If $\tau=(T_0,\dots,T_{m-1},t)\in F$ and $s\in S$, then by (\ref {E:TwistComp}) and (\ref {E:alpha}) we have
\begin {equation}\notag
\tau\cdot \alpha(s)=((\pi^{\tau}_0\circ s)[k],\dots,(\pi^{\tau}_{m-1}\circ s)[k],r\circ {\rm res}_{s[k]}\circ t).
\end {equation}
Now, one checks, using \eqref{E:pitau}, that $\pi^{\tau\cdot \alpha(s)}=\phi(\tau)\cdot s$, which implies \eqref{E:interpretation}, as required.
\end {proof}

By Propositions~\ref {T:Weakinterpretation}, \ref{T:VProduct} and \ref{P:Weakinterpretation}, the $d$-Ramsey condition holds for $(\mathcal F_2,\mathcal S_2,\bullet)$, so
Theorem~\ref{T} follows.

\end{document}